\newtheorem{theorem}{Theorem}
\newtheorem{corollary}[theorem]{Corollary}
\newtheorem{lemma}[theorem]{Lemma}
\newtheorem{proposition}[theorem]{Proposition}
\newtheorem{remark}[theorem]{Remark}
\newtheorem{definition}[theorem]{Definition}
\newtheorem{example}[theorem]{Example}
\begin{document}
\title{Generalized barycenters and variance maximization on metric spaces \footnote{ B.P. is pleased to acknowledge the support of  National Sciences and Engineering Research Council of Canada Discovery Grant number 04658-2018.}}

\author{ Brendan Pass\footnote{Department of Mathematical and Statistical Sciences, 632 CAB, University of Alberta, Edmonton, Alberta, Canada, T6G 2G1 pass@ualberta.ca.}}
\maketitle
\begin{abstract}
We show that the variance of a probability measure $\mu$ on a compact subset $X$ of a complete metric space $M$ is bounded by the square of the circumradius $R$ of the canonical embedding of $X$ into the space $P(M)$ of probability measures on $M$, equipped with the Wasserstein metric.  When barycenters of measures on $X$ are unique (such as on CAT($0$) spaces), our approach shows that $R$ in fact coincides with the circumradius of $X$ and so this result extends a recent result of Lim-McCann from Euclidean space.  Our approach involves bi-linear minimax theory on $P(X) \times P(M)$ and extends easily to the case when the variance is replaced by very general moments.

As an application, we provide a simple proof of Jung's theorem on CAT($k$) spaces, a result originally due to Dekster and Lang-Schroeder.

\end{abstract}

\section{Introduction}
The aim of this note is to establish bounds on the variance, and generalizations of it, of probability measures on metric spaces.   Letting $X \subseteq M$ be a compact subset of a complete metric space $(M,d)$, the variance of a probability measure $\mu \in P(X)$ on $X$ is
\begin{equation}\label{eqn: variance}
Var(\mu):=\inf_{y \in M}\int_Xd^2(x,y)d\mu(x).
\end{equation}	
Points $y$ where the infimum is attained (if they exist) are often called \emph{barycenters}, or Frechet means, of $\mu$. 	When $M =\mathbb{R}^n$, $\bar x_\mu =\int_Xxd\mu(x)$ is the unique barycenter, and so \eqref{eqn: variance}  coincides with the familiar formula $Var(\mu) =\int_X|x-\bar x_\mu|^2d\mu(x)$.  On the real line (ie, when $n=1$), a classical result of Popoviciu bounds the variance by the diameter of $X$: if $X \subseteq [a,b]$, then $Var(\mu) \leq \frac{1}{4}(b-a)^2$ for all $\mu \in P(X)$.  Equality occurs only when $\mu=\frac{1}{2}(\delta_a+\delta_b)$ \cite{Popoviciu35}.

Recent work by Lim and McCann extends this to higher dimensions \cite{LimMcCann20a}; for $X \subseteq \mathbb{R}^n$, they prove (among other results) that 
$$
Var(\mu) \leq R^2
$$
where $R$ is the radius of the smallest closed ball $\bar B(y,R)$ containing $X$ (in our subsequent terminology, $R$ will be called the \emph{circumradius}, $y$ the \emph{circumcenter} and $\bar B(y,R)$ the \emph{circumball}, of $X$).  Equality is obtained if and only if $\mu$ is supported on the intersection $\partial \bar B(y,R) \cap X$ of the corresponding sphere with $X$, and the barycenter of $\mu$ is the circumcenter $y$.  Their proof of this uses convex-concave minimax theory for the functional $H(y,\mu) =\int_X|x-y|^2d\mu(x)$.  This theory applies since the domain $\mathbb{R}^n$ of $y$ is affine and the integrand convex in $y$; Lim and McCann in fact prove an analogous result for more general convex moments (that is, when the integrand $|x-y|^2$ is replaced by a general convex function $v(x-y)$).  It does not appear, however, that one can directly apply these techniques to move beyond the affine setting and deal with more general metric spaces $M$ (or, even for $M=\mathbb{R}^n$, with non-convex integrands $V(x,y)$ replacing $|x-y|^2$ in \eqref{eqn: variance}).

We show here that this obstacle can be overcome by working with a type of \emph{generalized barycenter} introduced in recent work with Kim \cite{KimPass18}.  These amount to barycenters of measures on the image $i(X)$ of $X$ under the canonical isometric embedding $i:x \mapsto \delta_x$ of $X$ into the space $P(M)$ of probability measures on $M$, equipped with the Wasserstein metric.  Viewed within this framework, the analogue of the functional $H$ is \emph{bi-linear} on $P(X) \times P(M)$ (see \eqref{eqn: def of objection} below), without \emph{any} restriction on $M$.  We show that for any measure $\mu \in P(X)$ we have 
$$
Var(\mu) \leq R^2
$$
where $R$ is the circumradius of $i(X)$ within the metric space $P(M)$, with equality if and only if the pushforward $i_\# \mu$ is concentrated on the boundary of a circumball  of $i(X)$ and a Wasserstein barycenter $\nu$ of it is a circumcenter. In addition, since the bi-linearity does not depend on the integrand, this technique applies equally well to other continuous moment functions, yielding analogous results, as we show.

As an application of their work, Lim and McCann establish that among all measures $\mu \in P(X)$  supported on  sets $X \subseteq \mathbb{R}^n$ with diameter at most $1$, the maximal variance is obtained by uniform measure on the $n+1$ vertices of the unit simplex.  They show that this is equivalent to a classical theorem of Jung, which states that the diameter of any set $X$ is at most $2R\sqrt{\frac{n+1}{2n}}$, where $R$ is the circumradius of $X$ \cite{Jung1901}.  On the other hand,  Jung's theorem has been generalized to sufficiently small sets in metric spaces with upper sectional curvature bounds, first by Dekster \cite{Dekster97} under several technical assumptions, and subsequently, with these assumptions eliminated, by Lang and Schroeder \cite{LangSchroeder97}; we use our framework to provide a simple new proof of this result.

In the following section, we introduce the general setting we will work in and establish a preliminary result which underlies the rest of the paper.  In section 3 we prove our main result on variance maximizers on metric spaces, several consequences of it, and a generalization to general valuation or moment functions.  The fourth section is reserved for our new proof of Jung's theorem on metric spaces with upper curvature bounds.

\section{General setting and preliminary result}
In this section, we establish a background proposition which our subsequent analysis will hinge on.

Consider a continuous function $V:X \times Y \rightarrow \mathbb{R}$ on compact metric spaces $X$ and $Y$.  In the following sections, our primary interest will be in the case where $V$ is the squared distance on a common metric space $M$ containing both $X$ and $Y$; however, working with a general moment function $V$ costs no extra difficulty in the proofs in this section, and so we choose to do so in order to illustrate the flexibility of this approach.

Given a probability measure $\mu \in P(X)$, we define the \emph{$V$-variance} of $\mu$ as

$$
Var_V(\mu) =\min_{y \in Y}\int_XV(x,y)d\mu(x).
$$

We consider the problem of maximizing the $V$-variance over $P(X)$. We will show below that this is equivalent to minimizing the \emph{$V$-anti-variance} of $\nu$ over $P(Y)$, where the anti-variance of  $\nu \in P(Y)$ is defined as
$$
AVar_V(\nu) = \max_{x \in X}\int_YV(x,y)d\nu(y).
$$
We will refer to minimizers of  $y \mapsto \int_XV(x,y)d\mu(y)$ as \emph{$V$-barycenters} of $\mu$ and maximizers of $x \mapsto \int_YV(x,y)d\nu(y)$ as \emph{$V$-anti-barycenters of $\nu$}.

We will consider the following functional on $P(X) \times P(Y)$: 
\begin{equation}\label{eqn: def of objection}
\mathcal{V}(\mu,\nu):=\int_X\int_YV(x,y)d\mu(x)d\nu(y)
\end{equation}

We next note a simple fact about $\mathcal{V}$:
\begin{lemma}\label{lem: gen barycenters}
The $V$-variance of $\mu \in P(X)$ satisfies $Var_V(\mu) =\min_{\nu \in P(Y)} \mathcal{V}(\mu,\nu)$ and $\nu \in P(Y)$ minimizes $\nu \mapsto \mathcal{V}(\mu,\nu)$ if and only $\nu$ almost every $y$ is a $V$-barycenter of $\mu$.

Similarly, the  $V$-anti-variance of $\nu \in P(Y)$ satisfies $AVar_V(\nu) =\max_{\mu \in P(X)} \mathcal{V}(\mu,\nu)$,  and $\mu \in P(X)$ maximizes $\mu \mapsto \mathcal{V}(\mu,\nu)$ if and only $\mu$ almost every $x$ is a $V$-anti-barycenter of $\nu$. 
\end{lemma}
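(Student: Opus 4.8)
The plan is to prove the first assertion; the second follows by the symmetric argument (replacing $V(x,y)$ by $V(x,y)$ viewed as a function on $Y \times X$, $\mu$ by $\nu$, and swapping $\min$ with $\max$). First I would use Fubini's theorem, which applies since $V$ is continuous hence bounded on the compact set $X \times Y$ and $\mu \otimes \nu$ is a finite measure: this lets me write $\mathcal{V}(\mu,\nu) = \int_Y \big(\int_X V(x,y)\,d\mu(x)\big)\,d\nu(y) = \int_Y \phi_\mu(y)\,d\nu(y)$, where $\phi_\mu(y) := \int_X V(x,y)\,d\mu(x)$. The function $\phi_\mu$ is continuous on the compact space $Y$ (by dominated convergence, again using the uniform bound and continuity of $V$), so $m := \min_{y\in Y}\phi_\mu(y) = Var_V(\mu)$ is attained, and the set $B_\mu := \{y \in Y : \phi_\mu(y) = m\}$ of $V$-barycenters of $\mu$ is a nonempty compact set.

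Next I would establish the identity $\min_{\nu \in P(Y)} \mathcal{V}(\mu,\nu) = m$. For the lower bound, since $\phi_\mu(y) \geq m$ pointwise and $\nu$ is a probability measure, $\int_Y \phi_\mu\,d\nu \geq m$ for every $\nu \in P(Y)$. For the reverse inequality, take $\nu = \delta_{y_0}$ for any $y_0 \in B_\mu$; then $\mathcal{V}(\mu,\delta_{y_0}) = \phi_\mu(y_0) = m$. Hence the minimum over $P(Y)$ equals $m = Var_V(\mu)$, and it is attained.

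Finally I would characterize the minimizers. Suppose $\nu$ attains the minimum, i.e. $\int_Y \phi_\mu\,d\nu = m$. Since $\phi_\mu - m \geq 0$ on $Y$ and has zero integral against $\nu$, the nonnegative continuous function $\phi_\mu - m$ vanishes $\nu$-almost everywhere; equivalently $\nu$ is concentrated on $B_\mu$, i.e. $\nu$-a.e. $y$ is a $V$-barycenter of $\mu$. Conversely, if $\nu(B_\mu) = 1$ then $\phi_\mu = m$ $\nu$-a.e., so $\int_Y \phi_\mu\,d\nu = m$ and $\nu$ is a minimizer. I do not anticipate a genuine obstacle here; the only points requiring care are the measurability and integrability bookkeeping needed to invoke Fubini and to ensure $\phi_\mu$ is continuous (so that "$\nu$-a.e. $y$ is a barycenter" is the same as "$\nu$ is supported in the closed set $B_\mu$"), both of which are immediate from compactness of $X \times Y$ and continuity of $V$.
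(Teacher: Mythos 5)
Your proposal is correct and follows essentially the same route as the paper: both arguments reduce to the pointwise inequality $\phi_\mu(y)\ge Var_V(\mu)$ with equality exactly at $V$-barycenters, integrate it against $\nu$, and conclude that equality in the integrated inequality forces $\nu$ to concentrate on the barycenter set. You simply make explicit the bookkeeping (Fubini, continuity of $\phi_\mu$, attainment via a Dirac mass) that the paper leaves implicit.
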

\begin{proof}
	We prove only the first assertion, since the proof of the second is essentially identical.
	
	For every $y$, we have $Var_V(\mu) \leq \int_XV(x,y)d\mu(x)$, with equality if and only if $y$ is a $V$-barycenter of $\mu$.  Integrating against any $\nu \in P(Y)$ yields $Var_V(\mu) \leq \mathcal{V}(\mu,\nu)$, with equality if and only if $\nu$ almost every $y$ is a $V$-barycenter, in which case $\nu$ minimizes  $\nu \mapsto \mathcal{V}(\mu,\nu)$.
\end{proof}

 Motivated by the preceding lemma, we will call minimizers of  $ \nu \mapsto \mathcal{V}( \mu, \nu)$ (respectively, maximizers of $ \mu \mapsto \mathcal{V}( \mu,\nu)$) \emph{generalized $V$-barycenters} of $\mu$ (respectively, \emph{generalized $V$-anti-barycenters} of $\nu$). 
 
 Note that when $X \subseteq M$, 
 $$
 Y=conv(X):=\{y:\text{ y minimizes } z\in M \mapsto \int_Xd^2(x,z)d\mu(x) \text{ for some } \mu \in P(X)   \}
 $$ is the \emph{barycentric convex hull of X} (that is, the set of all barycenters in $M$ of measures supported on $X$)  and $V(x,y) =d^2(x,y)$ is the metric distance squared, $V$-barycenters and the $V$-variance coincide with the classical metric barycenters and variance of the measure $\mu$ on the metric space $M$, respectively (equaling $\bar x_\mu=\int_Xxd\mu(x)$ and $\int_X|x-\bar x_\mu|^2d\mu(x)$  when $X\subseteq\mathbb{R}^n$  ).
In this case we will drop the prefix $V$ on the other nomenclature as well (so that, for example, a generalized barycenter of $\mu$ is a minimizer of $\nu \mapsto \int_X\int_Yd^2(x,y)d\mu(x)d\nu(y)$). The concept of generalized barycenters was in fact introduced in earlier joint work with Kim \cite{KimPass18} (although this terminology was not used there).

The following definition will play an important role in this paper.
\begin{definition}\label{def: saddle point}
	A pair $(\mu,\nu) \in P(X) \times P(Y)$ is called a \emph{saddle point} of $\mathcal{V}$ if $\mu$ is  a generalized $V$-anti-barycenter of $\nu$ and $\nu$ is a generalized $V$-barycenter of $\mu$.
\end{definition}

\begin{remark}
Lim-McCann's original motivation for variance maximization problems came from models describing swarming in physics and biology \cite{LimMcCann20b}; their goal was to understand minimizers of an interaction energy $\mu \mapsto \int_X\int_XV(x,y)d\mu(x)d\mu(y)$ of a population $\mu$ with pairwise interaction $V(x,y)$ (this is equivalent to the variance when $V(x,y) =|x-y|^2$ on $X=\mathbb{R}^n$; more generally, in a certain limit another term forces the population to live on a set with a fixed upper bound on its diameter).  In our setting, $\nu$ and $\mu$ might be thought of as populations of two species; agents $x \in X$ wish to maximize their average interaction energies, $\int_YV(x,y)d\nu(y)$ with agents in $Y$ while agents $y$ in $Y$ seek to minimize their average interaction energy $\int_YV(x,y)d\mu(x)$.  Saddle points capture exactly equilibria in this model; if $(\mu,\nu)$ is a saddle point, Lemma \ref{lem: gen barycenters} implies that $\mu$ almost every $x \in X$ is a $V$-anti-barycenter of $\nu$ while $\nu$ almost every $y$ is a $V$-barycenter of $\mu$.
\end{remark}

Establishing the following result is the main purpose of this section.
\begin{proposition}\label{thm: saddle point}
There exists a  maximizer $\mu$ of the $V$-variance among all measures in $P(X)$ and a minimizer $\nu$ among measures in $P(Y)$ of the $V$-anti-variance such that $(\mu,\nu)$ is a saddle point of $\mathcal{V}$.

Furthermore, any other $\bar \mu \in P(X)$ maximizes the $V$-variance if and only if 
$(\bar \mu,\nu)$ is a saddle point of $\mathcal{V}$. Similarly, any other $\bar \nu \in P(Y)$ minimizes the $V$-anti-variance if and only if $(\mu,\bar \nu)$ is a saddle point of $\nu$.
\end{proposition}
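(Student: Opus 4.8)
The plan is to recognize Proposition \ref{thm: saddle point} as a packaging of the classical bilinear minimax theorem, once the hypotheses are verified and Definition \ref{def: saddle point} is rephrased in terms of the saddle-point inequalities.

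First I would record the structural facts. Equipped with the weak-$*$ topology, $P(X)$ and $P(Y)$ are compact (as $X,Y$ are compact metric) and convex, sitting inside the locally convex spaces of signed measures; the functional $\mathcal{V}$ is bilinear; and, since $V$ is uniformly continuous on the compact product $X\times Y$, $\mathcal{V}$ is jointly weak-$*$ continuous (if $\mu_n\to\mu$ and $\nu_n\to\nu$, then $y\mapsto\int_X V(x,y)\,d\mu_n(x)$ converges uniformly to $y\mapsto\int_X V(x,y)\,d\mu(x)$, and integrating a uniformly convergent family against weakly convergent measures passes to the limit). Hence $\mu\mapsto\min_{\nu}\mathcal{V}(\mu,\nu)$ and $\nu\mapsto\max_{\mu}\mathcal{V}(\mu,\nu)$ are continuous, so by compactness the maximal $V$-variance and minimal $V$-anti-variance are attained, and by Lemma \ref{lem: gen barycenters} they equal $\sup_{\mu}\inf_{\nu}\mathcal{V}(\mu,\nu)$ and $\inf_{\nu}\sup_{\mu}\mathcal{V}(\mu,\nu)$ respectively. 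Finally, using Lemma \ref{lem: gen barycenters} once more, $(\mu,\nu)$ is a saddle point in the sense of Definition \ref{def: saddle point} exactly when $Var_V(\mu)=\mathcal{V}(\mu,\nu)=AVar_V(\nu)$, i.e. when $\mathcal{V}(\mu',\nu)\le\mathcal{V}(\mu,\nu)\le\mathcal{V}(\mu,\nu')$ for all $\mu'\in P(X)$ and $\nu'\in P(Y)$ — the usual saddle-point inequalities.

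Next I would invoke a minimax theorem (in this bilinear, compact convex setting von Neumann's; Sion's applies equally) to get
$$
\max_{\mu}Var_V(\mu)=\sup_{\mu}\inf_{\nu}\mathcal{V}(\mu,\nu)=\inf_{\nu}\sup_{\mu}\mathcal{V}(\mu,\nu)=\min_{\nu}AVar_V(\nu)=:v .
$$
Choosing a maximizer $\mu$ of the $V$-variance and a minimizer $\nu$ of the $V$-anti-variance, the squeeze
$$
v=\min_{\nu'}\mathcal{V}(\mu,\nu')\le\mathcal{V}(\mu,\nu)\le\max_{\mu'}\mathcal{V}(\mu',\nu)=AVar_V(\nu)=v
$$
is forced to consist of equalities, and the resulting identities $\mathcal{V}(\mu,\nu)=\min_{\nu'}\mathcal{V}(\mu,\nu')$ and $\mathcal{V}(\mu,\nu)=\max_{\mu'}\mathcal{V}(\mu',\nu)$ say, via Lemma \ref{lem: gen barycenters}, precisely that $\nu$ is a generalized $V$-barycenter of $\mu$ and $\mu$ a generalized $V$-anti-barycenter of $\nu$; this proves the first assertion.

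For the "furthermore" part I would keep this $\nu$ fixed. If $(\bar\mu,\nu)$ is a saddle point then $Var_V(\bar\mu)=\mathcal{V}(\bar\mu,\nu)=AVar_V(\nu)=v$, so $\bar\mu$ maximizes the $V$-variance. Conversely, if $Var_V(\bar\mu)=v$ then $\mathcal{V}(\bar\mu,\nu)\ge\min_{\nu'}\mathcal{V}(\bar\mu,\nu')=v$ while $\mathcal{V}(\bar\mu,\nu)\le\max_{\mu'}\mathcal{V}(\mu',\nu)=AVar_V(\nu)=v$, so $Var_V(\bar\mu)=\mathcal{V}(\bar\mu,\nu)=AVar_V(\nu)$ and hence $(\bar\mu,\nu)$ is a saddle point; the statement for $\bar\nu$ is the mirror image with the maximizer $\mu$ held fixed. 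The only substantive ingredient is the minimax equality; the rest is an unwinding of Lemma \ref{lem: gen barycenters}, and the one point that needs care is the joint weak-$*$ continuity of $\mathcal{V}$, which both licenses the minimax theorem and guarantees that the extremizers exist.
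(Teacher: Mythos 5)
Your proof is correct, but it takes a genuinely different route from the paper's. The paper never invokes a minimax equality: it defines the set-valued best-response map $\mathcal{F}(\mu,\nu) = \mathrm{argmax}\big(\bar \mu \mapsto \mathcal{V}(\bar \mu,\nu)\big) \times \mathrm{argmin}\big(\bar \nu \mapsto \mathcal{V}(\mu,\bar \nu)\big)$ on $P(X)\times P(Y)$, checks that it has closed graph and nonempty, compact, convex values, and produces a saddle point directly as a Kakutani--Glicksberg--Fan fixed point; the extremality of its components and the characterization of other extremizers are then read off from the chain $Var_V(\mu)=\mathcal{V}(\mu,\nu)\geq \mathcal{V}(\bar\mu,\nu)\geq Var_V(\bar\mu)$. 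You instead first obtain extremizers of $Var_V$ and $AVar_V$ by compactness, quote von Neumann/Sion to get $\sup_\mu\inf_\nu\mathcal{V}=\inf_\nu\sup_\mu\mathcal{V}$, and then squeeze to conclude that \emph{any} maximizer paired with \emph{any} minimizer is a saddle point, which makes the ``furthermore'' clause essentially automatic. The two arguments are close cousins---Sion's theorem in this compact bilinear setting is itself typically proved by a Kakutani-type fixed point or a separation argument---but your decomposition isolates the minimax equality as the single substantive input at the cost of citing it, while the paper's fixed-point route is self-contained modulo verifying the closed-graph and convex-valuedness properties of $\mathcal{F}$. Your justification of joint weak-$*$ continuity via equicontinuity of $y\mapsto\int_X V(x,y)\,d\mu_n(x)$ is the right way to license both the existence of extremizers and the hypotheses of the minimax theorem, and your translation of Definition~\ref{def: saddle point} into the saddle-point inequalities via Lemma~\ref{lem: gen barycenters} is exactly the equivalence the paper uses implicitly.
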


\begin{proof}
The proof is by standard minimax arguments. Note that (uniform on compact sets) continuity of $V$ implies weak continuity of the mappings $\mu \mapsto \mathcal{V}(\mu,\nu)$ and $\nu \mapsto \mathcal{V}(\mu,\nu)$ and thus existence of maximizers and minimizers respectively on the weakly compact sets $P(X)$ and $P(Y)$.  We define the set valued mapping $\mathcal{F}$ on $P(X) \times P(Y)$ by 
$$
\mathcal{F}:(\mu,\nu) \mapsto argmax \Big(\bar \mu \mapsto \mathcal{V}(\bar \mu,\nu)\Big) \times argmin \Big(\bar \nu \mapsto \mathcal{V}( \mu,\bar \nu)\Big)
$$
It is straightforward to show that $\mathcal{F}$ has a closed graph and that $\mathcal{F}(\mu,\nu)$ is non-empty, compact and convex for each $(\mu,\nu) \in P(X) \times P(Y)$.  The Kakutani -- Glicksberg -- Fan fixed point theorem (see, for instance, \cite{GranasDugundji03}) then ensures the existence of a fixed point $(\mu,\nu)$ of $\mathcal{F}$; that is, a saddle point of $\mathcal{V}$. 
 To see that $\mu$ maximizes the $V$-variance, note that for any other measure $\bar \mu \in P(X)$ we have, by Lemma \ref{lem: gen barycenters},
\begin{eqnarray*}
Var_V(\mu) &=&\int_X\int_YV(x,y)d\mu(x)d\nu(y)\\
&\geq&\int_X\int_YV(x,y)d\bar \mu(x)d\nu(y)\\
&\geq &Var_V(\bar \mu),
\end{eqnarray*}
with equality if and only $\bar \mu$ is a generalized $V$-anti-barycenter of $\nu$ (which gives equality in the first inequality above) and $\nu$ is a generalized $V$-barycenter of $\bar \mu$ (which gives equality in the second); that is, $(\bar \mu, \nu)$ is a saddle point. This yields the characterization of other maximizers in the statement of the Theorem.

A similar argument shows that $\nu$ minimizes the $V$-anti-variance, and yields the desired characterization of other minimizers $\bar \nu$ of the $V$-anti-variance.
\end{proof}
Although the proof of Proposition \ref{thm: saddle point} is straightforward, it has several notable consequences which we will describe in the coming sections.

\section{Maximizing variance on metric spaces.}
In this section, we will mostly (outside of Proposition \ref{cor: general V} below) focus on the case where $V$ is quadratic, $V(x,y) =d^2(x,y)$, where $X \subseteq M$, $Y =conv(X) \subseteq M$ and $(M,d)$ is a complete metric space.

We will call $R =\inf\{r:X\subseteq \bar B(y,r) \text{ for some } y \in M\} =\inf_{y \in M}\sup_{x\in X}d(x,y)$ the \emph{circumradius} of $X$ in $M$, where $\bar B(y,r)$ is the closed ball of radius $r$ centered at $y$.  If a ball $\bar B(y,r)$ attaining the minimum exists, we call it a \emph{circumball} of $X$ in $M$ and its center $y$ a \emph{circumcenter} of $X$ in $M$.
 
Recall that the Wasserstein metric on $P(M)$ is defined by
$$
W_2^2(\mu,\nu):=\inf\int_{M \times M}d^2(x,y)d\gamma(x,y)
$$
where the infimum is over all joint measures $\gamma \in P(M\times M)$ whose marginals are $\mu$ and $\nu$.  Of particular relevance here is the case where $\mu = \delta_x$ is a Dirac mass, in which case product measure $\delta_x \otimes \nu$ is the only joint measure with $\delta_x$ and $\nu$ as marginals and so $W_2^2(\delta_x ,\nu) =\int_Md^2(x,y)d\nu(y)$.

There is  a canonical isometry $i:X \rightarrow P(M)$, $i(x) =\delta_x$ from $X$ into $P(M)$ endowed with the Wasserstein distance.  We can then consider the circumradius of the subset  $i(X)$ of the metric space $(P(M), W_2)$, which is the infimum over all $R$ such that $W_2(\sigma, \nu) \leq R$ for all $\sigma \in i(X)$, for some $\nu \in P(M)$, or, equivalently, the infimum over $R$ such that,
$$
\int_Md^2(x,y)d\nu(y)=W_2^2(\delta_x ,\nu) \leq R^2
$$
for all $x \in X$, for some $\nu \in P(M)$.  We also note, as was observed in \cite{KimPass18}, that generalized barycenters $\nu$ of $\mu$ are exactly barycenters of the pushforward $i_\#\mu \in P(P(M))$, with respect to the Wasserstein metric. 

Our main result is that a generalization of Lim and McCann's Theorem 1.1 b) \cite{LimMcCann20a} holds for the canonical embedding of $X$  into its Wasserstein space $P(X)$:
\begin{theorem}\label{cor: general Jung}
Assume that both $X \subseteq M$ and its convex hull $Y=conv(X)$ are compact, and that each $\mu \in P(X)$ admits a barycenter.  Then there exists a circumcenter $\nu \in P(M)$ of $i(X)$ and the square $R^2$ of the circumradius $R$ of $i(X)$ is equal to the maximal variance of measures $\mu \in P(X)$, and to the minimal anti-variance among measures $\bar \nu \in P(Y)$.
	
	Furthermore, any $ \bar \mu$ maximizing the variance is supported on the corresponding sphere; that is $W_2(\delta_x,\nu)=R$ for $\mu$ a.e. $x$.  If $\bar \nu$ is any other circumcenter of $i(X)$, we also have $W_2(\delta_x,\bar \nu)=R$ for $\bar \mu$ almost every $x$.
\end{theorem}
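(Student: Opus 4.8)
The plan is to apply Proposition~\ref{thm: saddle point} with $V(x,y)=d^2(x,y)$, $X$, and $Y=conv(X)$, and then identify the saddle-point value with $R^2$. Since $X$ and $Y$ are compact and $d^2$ is continuous on $X\times Y$, the proposition applies and produces a saddle point $(\mu,\nu)\in P(X)\times P(Y)$ with $\mu$ a $V$-variance maximizer and $\nu$ a $V$-anti-variance minimizer. Because every $\mu\in P(X)$ admits a barycenter and $Y$ is, by definition, exactly the set of barycenters of measures on $X$, the minimum over $y\in Y$ defining $Var_V(\mu)$ agrees with the infimum over $y\in M$, i.e.\ $Var_V=Var$; and since $\delta_x\otimes\nu$ is the only coupling of $\delta_x$ with $\nu$ we have $\int_Yd^2(x,y)\,d\nu(y)=W_2^2(\delta_x,\nu)$, so $AVar_V(\nu)=\sup_{x\in X}W_2^2(\delta_x,\nu)$. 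Lemma~\ref{lem: gen barycenters} at the saddle point then yields a common value
\[
c:=Var(\mu)=\mathcal{V}(\mu,\nu)=AVar_V(\nu)=\min_{\bar\nu\in P(Y)}AVar_V(\bar\nu)=\max_{\bar\mu\in P(X)}Var(\bar\mu).
\]

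The heart of the proof is the equality $c=R^2$. For $c\le R^2$, I would use the Fubini/averaging bound: for any $\bar\nu\in P(M)$ and any $\bar\mu\in P(X)$,
\[
\int_XW_2^2(\delta_x,\bar\nu)\,d\bar\mu(x)=\int_M\Big(\int_Xd^2(x,y)\,d\bar\mu(x)\Big)d\bar\nu(y)\ \ge\ Var(\bar\mu),
\]
since the inner integral is at least $Var(\bar\mu)$ for every $y\in M$; hence $Var(\bar\mu)\le\sup_{x\in X}W_2^2(\delta_x,\bar\nu)$, and taking the infimum over $\bar\nu\in P(M)$ and then the supremum over $\bar\mu$ gives $c\le R^2$. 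For $c\ge R^2$, I would observe that the particular $\nu\in P(Y)\subseteq P(M)$ from the saddle point satisfies $W_2^2(\delta_x,\nu)\le AVar_V(\nu)=c$ for every $x\in X$, so $i(X)$ lies in the closed Wasserstein ball $\bar B(\nu,\sqrt c)$ and therefore $R^2\le c$. Thus $c=R^2$; moreover $\sup_{x\in X}W_2^2(\delta_x,\nu)\le R^2$ combined with $\sup_{x\in X}W_2^2(\delta_x,\nu)\ge R^2$ (the definition of circumradius as an infimum) forces equality, so $\nu\in P(M)$ is a circumcenter of $i(X)$ with circumball $\bar B(\nu,R)$, and the displayed chain identifies $R^2$ with both the maximal variance and the minimal anti-variance over $P(Y)$.

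For the first additional assertion, if $\bar\mu\in P(X)$ is any variance maximizer then, by Proposition~\ref{thm: saddle point}, $(\bar\mu,\nu)$ is again a saddle point, so $\bar\mu$ is a generalized $V$-anti-barycenter of $\nu$; by Lemma~\ref{lem: gen barycenters}, $\bar\mu$-a.e.\ $x$ is a $V$-anti-barycenter of $\nu$, i.e.\ $W_2^2(\delta_x,\nu)=\int_Yd^2(x,y)\,d\nu(y)=AVar_V(\nu)=R^2$, which is exactly $W_2(\delta_x,\nu)=R$ for $\bar\mu$-a.e.\ $x$. For the last assertion, let $\bar\nu\in P(M)$ be any circumcenter of $i(X)$, so $W_2^2(\delta_x,\bar\nu)\le R^2$ for all $x\in X$, and let $\bar\mu$ be a variance maximizer. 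Using $Var(\bar\mu)=R^2$ and the Fubini identity above,
\[
R^2=Var(\bar\mu)\le\int_XW_2^2(\delta_x,\bar\nu)\,d\bar\mu(x)\le R^2,
\]
so the nonnegative function $x\mapsto R^2-W_2^2(\delta_x,\bar\nu)$ has zero $\bar\mu$-integral and hence vanishes $\bar\mu$-a.e., giving $W_2(\delta_x,\bar\nu)=R$ for $\bar\mu$-a.e.\ $x$.

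The only genuine subtlety I anticipate is the bookkeeping between the two ambient spaces $Y=conv(X)$ and $M$: one must check that the minimax problem and the anti-variance functional, set up over $P(Y)$ by Proposition~\ref{thm: saddle point}, nonetheless compute the circumradius of $i(X)$ inside the full Wasserstein space $(P(M),W_2)$. The averaging inequality $\int_XW_2^2(\delta_x,\cdot)\,d\bar\mu\ge Var(\bar\mu)$ is precisely what bridges this gap; once it is available, everything else is a direct application of Proposition~\ref{thm: saddle point} and Lemma~\ref{lem: gen barycenters}.
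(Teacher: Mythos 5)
Your proof is correct and follows essentially the same route as the paper: both hinge on the saddle point $(\mu,\nu)$ from Proposition~\ref{thm: saddle point}, the identification $\int_Y d^2(x,y)\,d\nu(y)=W_2^2(\delta_x,\nu)$, and the averaging/Fubini inequality to compare the saddle value with the circumradius of $i(X)$ in $P(M)$. The only cosmetic difference is that the paper establishes $R^2=c$ by contradiction while you prove the two inequalities directly, and your explicit Fubini argument for an arbitrary circumcenter $\bar\nu\in P(M)$ is, if anything, slightly more careful than the paper's ``same argument applies'' remark.
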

\begin{proof}
	Let $(\mu, \nu)$ be the saddle point guaranteed to exist by Proposition \ref{thm: saddle point}.  Using Lemma \ref{lem: gen barycenters}, $\nu$ almost every $y$ is a barycenter of $\mu$ and the maximal variance is given by $R^2:=\int_Xd^2(x,y)d\mu(x)$ for each such $y$; integrating against $\nu$, and using the fact that $\mu$ is a generalized anti-barycenter of $\nu$, combined with Lemma \ref{lem: gen barycenters} again gives $R^2=\int_X\int_Yd^2(x,y)d\mu(x)d\nu(y) =\int_Yd^2(x,y)d\nu(y) = W^2_2(\delta_x,\nu)$ for $\mu$ almost all $x$.  This $R^2$ is also the anti-variance of $\nu$ (and therefore the minimal anti-variance by Proposition \ref{thm: saddle point}), which means that for every $z \in X$ we have $W^2_2(\delta_z,\nu)=\int_Yd^2(z,y)d\nu(y) \leq R^2$; that is, $i(X) \subseteq B(\nu,R)$.  The same argument applies to any other $\bar \mu$ maximizing the variance and $\bar \nu$ minimizing the anti-variance.
	  
	It remains to show that $R$ is the circumradius of $i(X)$ in $P(M)$; this is easily established by contradiction.  If $i(X) \subseteq \bar B(\nu',R')$ for some $\nu' \in P(M)$ and some $R'<R$,  then each $x \in X$ satisfies $\int_Yd^2(x,y)d\nu'(y) = W_2^2(\delta_x,\nu') \leq R'^2<R^2 $ and so the anti-variance of $\nu'$ is $\max_{x \in X}\int_Md^2(y,x)d\nu'(y)\leq R'^2<R^2 $.  This contradicts the fact that $\nu$ has minimal anti-variance.
\end{proof}

When barycenters are unique, as is the case on $CAT(0)$ spaces, or more generally, when the circumradius of $X$ is sufficiently small relative to its sectional curvature, this theorem yields the following modest contribution to our understanding of the geometry of Wasserstein space.
\begin{corollary}\label{cor: circumradius equal}
Let $X\subseteq M$ be such that  at least one variance maximizing probability measure $\mu \in P(X)$ has a unique barycenter.  Then, under the assumptions in Theorem \ref{cor: general Jung}, the circumradius of $X$ is the same as the circumradius of $i(X)$.  
\end{corollary}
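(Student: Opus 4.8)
The plan is to show that the circumradius $R_X := \inf_{y \in M}\sup_{x \in X}d(x,y)$ of $X$ in $M$ coincides with $R$, the circumradius of $i(X)$ in $P(M)$ identified in Theorem \ref{cor: general Jung}. I would prove two inequalities; one is soft and requires no hypotheses, and the other is where the uniqueness of the barycenter does its work.

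First I would establish $R \le R_X$, which uses only that $i$ is an isometry. Given any $y \in M$ and $r > R_X$ with $X \subseteq \bar B(y,r)$, the Dirac mass $\delta_y \in P(M)$ satisfies $W_2(\delta_x,\delta_y) = d(x,y) \le r$ for every $x \in X$, so $i(X) \subseteq \bar B(\delta_y, r)$ in $(P(M),W_2)$. Letting $r \downarrow R_X$ shows that the circumradius of $i(X)$, which equals $R$ by Theorem \ref{cor: general Jung}, is at most $R_X$.

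For the reverse inequality $R_X \le R$, let $\mu \in P(X)$ be a variance maximizer whose barycenter $\bar y \in Y = conv(X) \subseteq M$ is unique, and let $(\mu_0,\nu)$ be the saddle point of Proposition \ref{thm: saddle point}, so that by Theorem \ref{cor: general Jung} the measure $\nu \in P(Y)$ is a circumcenter of $i(X)$ and has anti-variance equal to $R^2$. Since $\mu$ is also a variance maximizer, the ``furthermore'' clause of Proposition \ref{thm: saddle point} gives that $(\mu,\nu)$ is again a saddle point; hence, by Lemma \ref{lem: gen barycenters}, $\nu$ is concentrated on the set of barycenters of $\mu$, which by hypothesis is the singleton $\{\bar y\}$. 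Therefore $\nu = \delta_{\bar y}$, and its anti-variance is $\max_{x \in X}\int_Y d^2(x,y)\,d\delta_{\bar y}(y) = \max_{x \in X}d^2(x,\bar y)$. Equating this with $R^2$ yields $X \subseteq \bar B(\bar y, R)$, so $R_X \le R$. Combining the two bounds gives $R_X = R$, and as a byproduct $\bar y$ is a genuine circumcenter of $X$ in $M$.

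I do not anticipate a serious obstacle here; the only point to handle carefully is the bookkeeping in the third step, namely making sure the measure $\nu$ paired with $\mu$ in the saddle point is the circumcenter of $i(X)$ from Theorem \ref{cor: general Jung}, and that uniqueness of the barycenter of $\mu$ forces it to be a point mass. That collapse is really the entire content of the corollary: a circumcenter of $i(X)$ that happens to be a Dirac mass is exactly a circumcenter of $X$ in $M$.
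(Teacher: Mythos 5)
Your proposal is correct and follows essentially the same route as the paper: the easy inequality via the isometric embedding of a circumball of $X$ into $P(M)$, and the reverse inequality by using the saddle-point structure to show the circumcenter $\nu$ of $i(X)$ charges only barycenters of the variance maximizer, hence collapses to $\delta_{\bar y}$, which pulls the circumball back to $M$. No gaps.
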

\begin{proof}
	If $X\subseteq \bar B(y,R)$  then the ball $\bar B(\delta_y,R)$ clearly contains $i(X)$ in $P(M)$, and so the circumradius of $i(X)$ is always less than or equal to that of $X$.
	
	On the other hand, Proposition \ref{thm: saddle point} implies the that each variance maximizing measure $\mu$ concentrates on the boundary $\{x: d(x,y)=R\}$ of a circumball $\bar B(\nu,R)$ of $i(X)$, where $\nu$ is any generalized barycenter of $\mu$.
	But this means that $\nu$ almost every $y$ is a barycenter of $\mu$, which by our uniqueness  assumption  implies $\nu = \delta_y$ where $y$ is the unique barycenter of $\mu$.  Since $i(X) \subseteq \bar B(\delta_y,R)$, we clearly have $X \subseteq \bar B(y,R)$, meaning that $R$ is the circumradius of $X$.
\end{proof}

The following Corollary generalizes Proposition 3.1 in \cite{LangSchroeder97} from CAT($k$) space (for compact sets $X$; compactness was not required in \cite{LangSchroeder97}).  Note that the uniqueness of barycenters required here follows under their assumptions from \cite{Yokota16} and \cite{Yokota17}; however, our result also applies to other subsets of metric spaces without upper curvature bounds but on which barycenters are unique, such as closed subsets of the set $P_{ac}(M) \subseteq P(M)$ of absolutely continuous probability measures on compact underlying metric spaces, equipped with the Wasserstein metric \cite{KimPass2017}.

\begin{corollary}\label{cor: unique circumcenter}
	Assume that $X \subseteq M$ is such that at least one variance maximizing $\mu$ in $P(X)$ has a unique barycenter. Then, under the assumptions in Theorem \ref{cor: general Jung}, the circumcenter $y \in Y$ of $X$ is unique. 
	\end{corollary}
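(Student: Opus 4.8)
The plan is to use a variance-maximizing measure with a unique barycenter as a test object: I will show that \emph{every} circumcenter of $X$ is forced to be a barycenter of this measure, so that by uniqueness there is only one circumcenter.

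First I would fix, as the hypothesis permits, a variance-maximizing measure $\mu \in P(X)$ admitting a unique barycenter $y$. By Corollary~\ref{cor: circumradius equal} the circumradius of $X$ equals the circumradius $R$ of $i(X)$; by Theorem~\ref{cor: general Jung} the maximal variance is $R^2$, hence $Var(\mu)=R^2$; and, exactly as in the proof of Corollary~\ref{cor: circumradius equal}, $X\subseteq\bar B(y,R)$, so $y$ is itself a circumcenter of $X$. In particular a circumcenter exists, and it lies in $Y=conv(X)$, being the barycenter of a measure supported on $X$.

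Next, let $y'$ be any circumcenter of $X$, so that $d(x,y')\le R$ for all $x\in X$. Integrating against $\mu$ gives
$$
\int_X d^2(x,y')\,d\mu(x)\ \le\ R^2\ =\ Var(\mu).
$$
On the other hand, the definition of the variance as an infimum gives the reverse inequality, $Var(\mu)=\inf_{z\in M}\int_X d^2(x,z)\,d\mu(x)\le\int_X d^2(x,y')\,d\mu(x)$. Hence $\int_X d^2(x,y')\,d\mu(x)=Var(\mu)$, i.e.\ $y'$ is a barycenter of $\mu$. Since $\mu$ has a unique barycenter, $y'=y$. Thus $X$ has exactly one circumcenter, namely $y\in Y$, which is the assertion.

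I do not expect a genuine obstacle here: once the right measure is in hand the proof is a two-line squeeze. The only delicate point is the identity $Var(\mu)=R^2$ with $R$ the circumradius of $X$ itself, not merely of $i(X)$; this is precisely what the uniqueness-of-barycenter assumption supplies, via Corollary~\ref{cor: circumradius equal}. Without that identity a circumcenter $y'$ of $X$ would only yield $\int_X d^2(x,y')\,d\mu(x)\le(\text{circumradius of }X)^2$, an upper bound which in general exceeds $Var(\mu)=(\text{circumradius of }i(X))^2$, and one could no longer conclude that $y'$ is a barycenter of $\mu$. (One could instead argue through saddle points, noting that $\delta_{y'}$ minimizes the anti-variance and so $(\mu,\delta_{y'})$ is a saddle point of $\mathcal{V}$, but the direct estimate above is cleaner.)
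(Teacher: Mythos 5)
Your proof is correct, and it rests on the same key fact as the paper's: every circumcenter of $X$ is forced to be a barycenter of the chosen variance-maximizing $\mu$, so the uniqueness hypothesis finishes the job. The difference is in how that fact is derived. The paper routes through the anti-variance: by Theorem~\ref{cor: general Jung} and Corollary~\ref{cor: circumradius equal} the minimal anti-variance equals $R^2$, so $y'$ is a circumcenter if and only if $\delta_{y'}$ minimizes the anti-variance, whence Proposition~\ref{thm: saddle point} makes $(\mu,\delta_{y'})$ a saddle point and Lemma~\ref{lem: gen barycenters} identifies $y'$ as a barycenter of $\mu$ --- exactly the alternative you mention in your closing parenthetical. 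You instead use the direct squeeze $\int_X d^2(x,y')\,d\mu \le R^2 = Var(\mu) \le \int_X d^2(x,y')\,d\mu$, which is more elementary and bypasses the saddle-point machinery entirely; it even has the small advantage of applying to any circumcenter $y'\in M$, whereas the paper's argument literally requires $\delta_{y'}\in P(Y)$. You correctly identify the crux common to both routes: the identity $Var(\mu)=R^2$ with $R$ the circumradius of $X$ itself, which is precisely what Corollary~\ref{cor: circumradius equal} (and hence the unique-barycenter assumption) supplies.
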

\begin{proof}
	 Since by Theorem \ref{cor: general Jung} and Corollary \ref{cor: circumradius equal}, the square $R^2$ of the circumradius is equal to the minimal anti-variance, we see that  $y$ is a circumcenter if and only if $\delta_y$ has minimal anti-variance.

	 Letting $\mu$ be the variance maximizer with a unique barycenter, Proposition \ref{thm: saddle point} and Lemma \ref{lem: gen barycenters} then imply that every circumcenter is a barycenter of $\mu$; uniqueness of the circumcenter then  follows.
\end{proof}
The following immediate consequence of Theorem \ref{cor: general Jung} and Corollaries \ref{cor: circumradius equal} and \ref{cor: unique circumcenter} directly generalizes Theorem 1.1 (b) of Lim and McCann \cite{LimMcCann20a}.
\begin{corollary}\label{cor: unique bc}
Assume that $X \subseteq M$ is such that at least one variance maximizing  $\mu$ in $P(X)$ has a unique barycenter.  Then, under the assumptions in Theorem \ref{cor: general Jung}, the maximal variance among measures in $P(X)$ is equal to the circumradius $R$ of $X$, and every variance maximizing $\mu$ gives full measure to the boundary $\partial B(y,R)$ of the circumball. 
\end{corollary}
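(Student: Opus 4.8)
The plan is simply to assemble Theorem \ref{cor: general Jung} together with Corollaries \ref{cor: circumradius equal} and \ref{cor: unique circumcenter}, whose hypotheses are all exactly the one assumed here. First, Theorem \ref{cor: general Jung} identifies the maximal variance over $P(X)$ with $R^2$, where $R$ is the circumradius of $i(X)$ inside $(P(M),W_2)$; Corollary \ref{cor: circumradius equal} then says that this $R$ coincides with the circumradius of $X$ in $M$, which already yields the first assertion.

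Next I would pin down the circumball inside $M$ itself. The proof of Corollary \ref{cor: circumradius equal} already shows that, under the standing hypothesis, the variance maximizer $\mu$ furnished by Proposition \ref{thm: saddle point} has a unique barycenter $y$, that $\delta_y$ is a circumcenter of $i(X)$ in $P(M)$, and that $X\subseteq \bar B(y,R)$ — so $y$ is a circumcenter of $X$, and by Corollary \ref{cor: unique circumcenter} it is the only one.

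Finally, for the support statement I would feed $\bar\nu=\delta_y$ into the last sentence of Theorem \ref{cor: general Jung}: for every variance maximizing $\bar\mu\in P(X)$ and every circumcenter $\bar\nu$ of $i(X)$ one has $W_2(\delta_x,\bar\nu)=R$ for $\bar\mu$-a.e.\ $x$. Taking $\bar\nu=\delta_y$, this becomes $d(x,y)=W_2(\delta_x,\delta_y)=R$ for $\bar\mu$-a.e.\ $x$, i.e.\ $\bar\mu$ gives full mass to $\partial B(y,R)$.

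Since the corollary is flagged as an immediate consequence, I do not anticipate a genuine obstacle. The only points meriting a little care are: (i) checking that the three ingredients are simultaneously available under the single hypothesis that \emph{some} variance maximizer has a unique barycenter; and (ii) observing that although this uniqueness is assumed for only one maximizer, the boundary conclusion nonetheless holds for \emph{all} of them — which is legitimate because once $\delta_y$ has been identified as a circumcenter of $i(X)$, the final clause of Theorem \ref{cor: general Jung} applies uniformly to every variance maximizer $\bar\mu$.
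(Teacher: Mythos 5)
Your assembly of Theorem \ref{cor: general Jung} with Corollaries \ref{cor: circumradius equal} and \ref{cor: unique circumcenter} is correct and is precisely what the paper intends, since it states this corollary as an immediate consequence of those three results without writing a separate proof. Your care in noting that $\delta_y$ is a circumcenter of $i(X)$ (so the final clause of the theorem applies to every maximizer $\bar\mu$, not just the one with the unique barycenter) is exactly the right point to check, and your identification $W_2(\delta_x,\delta_y)=d(x,y)$ closes the argument.
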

In particular, it is worth noting that the maximal variance over any compact, convex set $X\subseteq \mathbb{R}^n$ is equal to the minimal anti-variance over the same set, and  anti-variance is minimized by the Dirac mass $\delta_y$, where $y$ is the circumcenter of $X$.

The assumption on the uniqueness of barycenters in Corollaries \ref{cor: circumradius equal}, \ref{cor: unique circumcenter} and \ref{cor: unique bc} is necessary, as the following example verifies. 
\begin{example} Let $X=Y=\mathcal{S}^n$ be the round sphere with metric diameter $1$ (ie, the geodesic distance from the north to south pole is $1$).  Then the only metric ball containing $X$ is the entire sphere, that is $\bar B(y,1)$ for any $y \in \mathcal{S}^n$. The circumradius is therefore $1$ and each point is a circumcenter.  However, it is not hard to see that there is no measure $\mu \in P(X)$ whose variance is $1$, so the maximal variance must be strictly  less than $1$.
	
On the other hand, Theorem \ref{cor: general Jung} still holds here. It is not hard to see that $(\mu,\nu ) =(vol/|vol|,vol/|vol|)$ is the saddle point from Proposition \ref{thm: saddle point}; that is both $\mu$ and $\nu$ are uniform.

The maximal variance is therefore attained by uniform measure and is equal to the average of the squared distance (which is clearly strictly less than the square $1$ of the circumradius, which is the \emph{maximal} squared distance between points).

 Theorem \ref{cor: general Jung} then implies that this coincides with the circumradius of $i(X)$, and that the circumcenter $\nu$ in Wasserstein space is uniform measure.  Symmetry implies $W_2(\delta_x,\nu) = \int_Xd^2(x,y)d\nu(y)$ is constant throughout $x \in spt(\mu) =X$, as predicted by Theorem \ref{cor: general Jung}.
\end{example}

We close this section by noting that Theorem \ref{cor: general Jung} and Corollary \ref{cor: unique bc} easily extend to general valuation, or moment functions, $V$.  The proof of the following Proposition is essentially the same as the arguments earlier in this section.

\begin{proposition}\label{cor: general V}
	Let $V: X\times Y \rightarrow \mathbb{R}$ be a continuous function on compact metric spaces $X$ and $Y$. Then
	\begin{enumerate}
		\item The maximal $V$-variance among measures $\mu \in P(X)$ is equal to the minimal $V$-anti-variance among $\nu \in P(Y)$.
		\item Let $\nu \in P(Y)$ minimize the $V$-anti-variance. Then $\mu \in P(X)$ maximizes the $V$-variance if and only for $\mu$ almost every $x$, $\int_YV(x,y) d\nu(y) =C_\nu$, where $C_\nu :=\max_{z \in X}\int_YV(z,y)d\nu(y)$ is the $V$-anti-variance of $\nu$ and $\nu$ almost every $y$ is a $V$-barycenter of $\mu$.		
		 \item 	If at least one variance maximizing measure $\mu$ on $X$ has a unique $V$-barycenter $y$, then the $V$-anti-variance is uniquely minimized by the Dirac mass $\delta_y$.
		 
		 In this case, $\bar \mu$ maximizes the $V$-variance if and only if $y$ is a $V$-barycenter of $\bar \mu$ and for $\bar \mu$ almost every $x$ we have $V(x,y) =C_y$, where $C_y:=\max_{z \in X}V(z,y)$ is the $V$-anti-variance of $\delta_y$.
	\end{enumerate}

\end{proposition}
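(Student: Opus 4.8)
The plan is to obtain all three parts directly from Proposition~\ref{thm: saddle point} and Lemma~\ref{lem: gen barycenters}, running exactly the arguments used for Theorem~\ref{cor: general Jung} and Corollaries~\ref{cor: circumradius equal}--\ref{cor: unique bc}, but keeping $V$ general rather than specializing to $d^2$.

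For part (1), I would begin with the saddle point $(\mu,\nu)\in P(X)\times P(Y)$ furnished by Proposition~\ref{thm: saddle point}, with $\mu$ maximizing the $V$-variance and $\nu$ minimizing the $V$-anti-variance. Since $\nu$ is a generalized $V$-barycenter of $\mu$, Lemma~\ref{lem: gen barycenters} gives $Var_V(\mu)=\mathcal{V}(\mu,\nu)$; since $\mu$ is a generalized $V$-anti-barycenter of $\nu$, the same lemma gives $\mathcal{V}(\mu,\nu)=AVar_V(\nu)$. Chaining these equalities yields $\max_{\mu'\in P(X)}Var_V(\mu')=\min_{\nu'\in P(Y)}AVar_V(\nu')$.

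For part (2), fix any minimizer $\nu$ of the $V$-anti-variance and any maximizer $\mu$ of the $V$-variance. Using Lemma~\ref{lem: gen barycenters} twice together with part (1),
$$
\max_{\mu'}Var_V(\mu')=Var_V(\mu)\le \mathcal{V}(\mu,\nu)\le AVar_V(\nu)=\min_{\nu'}AVar_V(\nu')=\max_{\mu'}Var_V(\mu'),
$$
so every inequality is in fact an equality. Equality in the first one means, by Lemma~\ref{lem: gen barycenters}, that $\nu$ almost every $y$ is a $V$-barycenter of $\mu$; equality in the second means that $\mu$ is a generalized $V$-anti-barycenter of $\nu$, i.e.\ $\int_YV(x,y)\,d\nu(y)=C_\nu$ for $\mu$ almost every $x$. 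Conversely, if $\mu$ satisfies both conditions then $Var_V(\mu)=\mathcal{V}(\mu,\nu)=AVar_V(\nu)$ equals the common extremal value from part (1), so $\mu$ is a maximizer.

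For part (3), let $\mu$ be a variance maximizer with unique $V$-barycenter $y$; note $y\in Y$ by the definition of a $V$-barycenter, so $\delta_y\in P(Y)$. If $\nu$ is any minimizer of the $V$-anti-variance, part (2) forces $\nu$ almost every point to be a $V$-barycenter of $\mu$, hence $\nu=\delta_y$ by uniqueness; thus $\delta_y$ is the unique minimizer, and its anti-variance is $AVar_V(\delta_y)=\max_{z\in X}V(z,y)=C_y$. Applying part (2) with $\nu=\delta_y$ then shows that $\bar\mu$ maximizes the $V$-variance if and only if $y$ is a $V$-barycenter of $\bar\mu$ and $V(x,y)=C_y$ for $\bar\mu$ almost every $x$. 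None of these steps is a genuine obstacle; the only places needing mild care are verifying that $\delta_y\in P(Y)$ in part (3) and transcribing the "if and only if" in Lemma~\ref{lem: gen barycenters} into the almost-everywhere statements claimed in parts (2) and (3).
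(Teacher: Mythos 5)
Your proposal is correct and follows exactly the route the paper intends: the paper gives no separate proof, stating only that the argument is "essentially the same" as those for Theorem~\ref{cor: general Jung} and Corollaries~\ref{cor: circumradius equal}--\ref{cor: unique bc}, and your write-up is precisely that argument carried out for general $V$ via the saddle point of Proposition~\ref{thm: saddle point} and the equality cases in Lemma~\ref{lem: gen barycenters}. No gaps.
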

\begin{example}
	Taking $V(x,y)=v(x-y)$ for $v$  convex and coercive on a compact $X \subseteq \mathbb{R}^n$ and compact $Y\subseteq \mathbb{R}^n$ large enough to contain all $V$-barycenters of measures on $X$, we recover the characterization of minimizers in \cite[Theorem 1.7]{LimMcCann20a}, provided that the $V$-anti-variance is minimized by a Dirac mass.
	
	This is clearly the case if $V$ is strictly convex, from the third point in the preceding Proposition; we claim it holds even without  strict convexity.  To see this, let $\nu$ minimize the anti-variance and $\bar y = \int_{\mathbb{R}^n}yd\nu(y)$ be the barycenter of $\nu$. Let $\bar x$ maximize  $x \mapsto V(x,\bar y)$, so that $V(\bar x,\bar y)=AVar_v(\delta_{\bar y})$.  By Jensen's inequality we have
	$$
AVar_v(\delta_{\bar y})	=V(\bar x, \bar y ) \leq \int_YV(\bar x ,y) d\nu(y) \leq AVar_V(\nu).
	$$
Thus, $\delta_{\bar y}$ minimizes the $V$-anti-variance, even for non-strictly convex $v$, and so Proposition \ref{cor: general V} implies Theorem 1.7 in \cite{LimMcCann20a}.  

In addition, the Proposition above applies to much more general $V$, with no convexity assumptions, than are dealt with in \cite{LimMcCann20a}, although in the general case the characterization of $V$-variance minimizers involves minimizers of the $V$-anti-variance of measures $\nu$, rather than minimizers of the simpler function $y \mapsto \max_xV(x,y)$, as is the case in \cite{LimMcCann20a}.

\end{example}
\section{Jung's theorem on spaces with curvature bounded above}

The purpose of this section is to provide a new proof of Jung's theorem, comparing the circumradius and diameter, $D=\sup_{x_0,x_1 \in X}d(x_1,x_2)$ of a compact set $X \subseteq M$,  when $M$ is a  CAT($k$) spaces.  We recall that CAT($k$) spaces are not-necessarily smooth generalizations of Riemannian manifolds with sectional curvature bounded above by $k$; we refer to, for example, \cite{AlexanderKapovitchPetrunin}, for precise definitions of CAT($k$) spaces and other relevant concepts.

The first proof of Jung's theorem on CAT($k$) spaces is due to Dekster \cite{Dekster97}; shortly afterwards, Lang and Schroeder produced a improved result with many technical assumptions removed \cite{LangSchroeder97}.

The formulation of Jung's theorem on CAT($k$) spaces requires the following notion:
\[ S(R,n,k) = \left\{
	\begin{array}{l l}
	\frac{1}{\sqrt{-k}}2\sinh^{-1}\big(\sqrt{\frac{n+1}{2n}}\sinh(\sqrt{-k}R)\big) & \quad k<0\\
	2R\sqrt{\frac{n+1}{2n}} & k=0\\
	\frac{1}{\sqrt{k}}2\sin^{-1}\big(\sqrt{\frac{n+1}{2n}}\sin(\sqrt{k}R)\big) &k>0.
	\end{array} \right.\]
We then set $S(R,\infty, K) = \lim_{n\rightarrow \infty} S(R,n,k) $, that is:
 
\[ S(R,\infty,k) = \left\{
\begin{array}{l l}
\frac{1}{\sqrt{-k}}2\sinh^{-1}\big(\frac{1}{\sqrt{2}}\sinh(\sqrt{-k}R)\big) & \quad k<0\\
\sqrt{2}R & k=0\\
\frac{1}{\sqrt{k}}2\sin^{-1}\big(\frac{1}{\sqrt{2}}\sin(\sqrt{k}R)\big) &k>0.
\end{array} \right.\]

We note that $S(R,n,k)$ can be interpreted as the third side length of a triangle in the model space of constant sectional curvature $k$ (that is, the $n$-dimensional complete, simply connected Riemannian manifold of constant sectional curvature $k$), when the other two sides both have length $R$ and subtend an angle of $\arccos(-1/n)$. 
  
We will need the following Lemma:
\begin{lemma}\label{lem: diff of distance}
Let  $t \mapsto x(t)$ be a unit speed geodesic  in the complete CAT($k$) space $M$ and $p \in M$ such that, if $k>0$, $d(x(0),p) <\frac{\pi}{2\sqrt{k}}$. Then $t \mapsto d^2(x(t),p)$ is differentiable  at $t=0$; its derivative is given by:

$$
\frac{d}{dt}(d^2(x(t),p))\Big |_{t=0} =-2d(x(0),p)\cos(\alpha)
$$
where $\alpha$ is the angle at $x(0)$ between $x(t)$ and the (unique) minimizing geodesic joining $x(0)$ and $p$.
\end{lemma}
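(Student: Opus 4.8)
The statement is a first-variation formula for the squared distance function along a geodesic in a CAT($k$) space. The natural strategy is to reduce to the model space of constant curvature $k$ via comparison, so that an explicit computation using the law of cosines in that model space does the work, and then argue that the comparison inequalities become equalities in the first-order limit.

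First I would set up the comparison triangle. Fix the unit-speed geodesic $t \mapsto x(t)$ and the point $p$; for small $t$ let $\gamma_t$ denote the (for $t$ small, unique, and under the curvature-plus-distance hypothesis when $k>0$ also unique for $p$) minimizing geodesic from $x(t)$ to $p$. Consider, for each small $t>0$, a comparison triangle $\bar\triangle x(0)x(t)p$ in the model space $M_k^2$ of curvature $k$, with the same side lengths $d(x(0),x(t)) = t$, $d(x(0),p)$, $d(x(t),p)$. Let $\bar\alpha(t)$ be the angle of this comparison triangle at the vertex corresponding to $x(0)$. The CAT($k$) condition gives the comparison-angle monotonicity/inequality $\bar\alpha(t) \ge \alpha_t$, where $\alpha_t$ is the genuine angle at $x(0)$ between $x(\cdot)$ and $\gamma_t$; as $t \to 0^+$ this angle $\alpha_t \to \alpha$, the angle between $x(\cdot)$ and the minimizing geodesic to $p$, since that geodesic is unique. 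Simultaneously, a lower curvature-type argument is not available, so I would instead get the matching lower bound on the one-sided derivative directly from the first-variation formula for arclength in CAT($k$) spaces (distances are directionally differentiable, with the derivative given by $-\cos$ of the angle), or, more elementarily, by applying the same comparison on the reversed geodesic $t \mapsto x(-t)$ to control the left derivative and using that the two one-sided limits must agree because $d^2(x(\cdot),p)$ is locally semiconcave (being a composition of the semiconcave $d(\cdot,p)$ with a smooth function, away from where $d=0$).

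The core computation is then the model-space law of cosines: in $M_k^2$, with two sides of length $r := d(x(0),p)$ and $t$ meeting at angle $\bar\alpha(t)$, the opposite side length $\bar\ell(t) = d(x(t),p)$ satisfies the explicit relation ($\cos\bar\ell = \cos r\cos t + \sin r \sin t\cos\bar\alpha(t)$ for $k=1$, and the analogous hyperbolic/Euclidean formulas). Differentiating $\bar\ell(t)^2$ at $t=0$ — where $\bar\alpha(t)\to\alpha$ — yields exactly $-2r\cos\alpha$ in all three cases $k<0,=0,>0$; this is a routine Taylor expansion. Because $d(x(t),p) \le \bar\ell(t)$ (CAT($k$) comparison) with equality at $t=0$, the function $t\mapsto d^2(x(t),p)$ is sandwiched between $\bar\ell(t)^2$ and a matching lower bound obtained symmetrically, forcing its derivative at $0$ to equal $-2r\cos\alpha$.

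The main obstacle is making the upper and lower bounds genuinely pinch to give differentiability rather than just one-sided Dini derivative bounds: the CAT($k$) comparison only gives the inequality $d(x(t),p)\le \bar\ell(t)$ in one direction, so I must separately justify that the one-sided derivatives from the left and right coincide. The cleanest route is to invoke that $d^2(\cdot,p)$ is locally Lipschitz and semiconcave along geodesics (with the semiconcavity constant controlled, when $k>0$, precisely by the hypothesis $d(x(0),p)<\pi/(2\sqrt k)$ which keeps us inside the injectivity/convexity radius), so that it is differentiable wherever both one-sided derivatives exist, and then the comparison computation identifies that common value. Verifying the $k>0$ case carefully — ensuring the comparison triangle exists and the minimizing geodesic from $x(t)$ to $p$ stays unique for small $t$ — is where the stated distance restriction is used, and is the one place requiring genuine care rather than bookkeeping.
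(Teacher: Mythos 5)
The paper disposes of this lemma in one line: it cites the first variation formula \cite{AlexanderKapovitchPetrunin}, which already asserts that $t\mapsto d(x(t),p)$ is (one-sidedly) differentiable at $t=0$ with derivative $-\cos\alpha$, and applies the chain rule. You instead set out to re-derive that formula from triangle comparison. Your core computation is essentially the standard proof of the one-sided statement and is sound: form the comparison triangle on $x(0),x(t),p$ in the model space, use that its angle $\bar\alpha(t)$ at the vertex corresponding to $x(0)$ decreases to the genuine angle $\alpha$ as $t\downarrow 0$ (this is how the angle is defined in a CAT($k$) space, and the hypothesis $d(x(0),p)<\pi/(2\sqrt k)$ makes $[x(0)p]$ unique so that $\alpha$ is unambiguous), and expand the model-space law of cosines in $t$. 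One correction of bookkeeping: the law of cosines for the \emph{comparison} triangle is an identity, not an inequality --- the comparison triangle has the same side lengths by definition, so $\bar\ell(t)=d(x(t),p)$ exactly, and no sandwich is needed for the right derivative; all you need is $\bar\alpha(t)\to\alpha$ together with a remainder uniform in the angle. (The CAT($k$) inequality you invoke actually points the other way: the model triangle built with the \emph{lower} angle $\alpha$ has third side at most $d(x(t),p)$.)

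The genuine gap is in your argument for two-sided differentiability. First, in a space with curvature bounded \emph{above}, $d^2(\cdot,p)$ is (within $B(p,\pi/(2\sqrt k))$ when $k>0$) \emph{semiconvex} along geodesics, not semiconcave; semiconcavity of distance functions is a feature of lower curvature bounds. Second, and more importantly, neither semiconvexity nor semiconcavity yields the implication you want: a convex function of one real variable has both one-sided derivatives at every point and can still have a kink, so ``locally Lipschitz and semiconcave, hence differentiable wherever both one-sided derivatives exist'' is not a valid inference. The correct ingredient is the fact, special to spaces with upper curvature bounds, that the adjacent angles $\alpha^+$ and $\alpha^-$ which $[x(0)p]$ makes with the forward and backward directions of the geodesic satisfy $\alpha^++\alpha^-=\pi$ exactly (in a general geodesic space one only has $\geq\pi$); this converts the left derivative $\cos\alpha^-$ obtained from your reversed-geodesic comparison into $-\cos\alpha^+$, matching the right derivative. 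That said, in the only place the lemma is applied the geodesic $y(\cdot)$ starts at $t=0$, so only the one-sided derivative --- the part your argument does establish --- is actually used.
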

\begin{proof}
	The first variation formula (see \cite[Formula 8.14.3]{AlexanderKapovitchPetrunin}) asserts that $t\mapsto d(x(t),p)$ is differentiable at $t=0$, with derivative $-\cos(\alpha)$; the result now follows from the chain rule.
\end{proof}
\begin{theorem}[Jung's Theorem on spaces with curvature bounded above]
Let $X \subseteq M$ be a compact subset of a complete CAT($k$) space $M$, which, if $k>0$, is contained in a metric ball of radius $r<\frac{\pi}{2\sqrt{k}}$.  Assume that all barycenters of measures on $X$ are contained in a compact subset $Y \subseteq M$.  Then there exists a unique circumcenter $y$ of $X$.

Furthermore, $D \geq S(R,\infty,k)$, where $D$ is the diameter of $X$ and $R$ the circumradius.  If, in addition, there exists a variance maximizer $\mu \in P(X)$ supported on $n+1$ points, we have $D \geq S(R,n,k)$. 
 \end{theorem}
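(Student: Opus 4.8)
The plan is to reduce everything, via the results already proved, to a single integral–geometric estimate on a variance maximizer, and then to close that estimate with the first variation formula (Lemma \ref{lem: diff of distance}) and the Alexandrov angle comparison for $\mathrm{CAT}(k)$ spaces. First I would note that, since $M$ is $\mathrm{CAT}(k)$ and (when $k>0$) $X$ lies in a ball of radius $r<\pi/(2\sqrt k)$, every $\mu\in P(X)$ has a \emph{unique} barycenter by \cite{Yokota16,Yokota17}, so Theorem \ref{cor: general Jung} and Corollaries \ref{cor: circumradius equal}, \ref{cor: unique circumcenter}, \ref{cor: unique bc} all apply. Corollary \ref{cor: unique circumcenter} gives the unique circumcenter $y$ at once, proving the first assertion. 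Moreover, chasing the proofs of Corollaries \ref{cor: circumradius equal} and \ref{cor: unique bc}, any variance maximizer $\mu\in P(X)$ has $y$ as its unique barycenter, is supported on the circumsphere $\{x:d(x,y)=R\}$, and has variance exactly $R^2$. All that remains is to show that such a $\mu$ forces $D\ge S(R,\infty,k)$, and $D\ge S(R,n,k)$ when $\mu$ is supported on $n+1$ points.

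Next I would extract a ``balance condition'' on directions at $y$. For $x$ on the circumsphere let $u_x$ be the direction at $y$ of the minimizing geodesic from $y$ to $x$ (unique, as $d(y,x)=R<\pi/\sqrt k$ when $k>0$, and automatically in the nonpositively curved cases), and write $\angle_y(x_0,x)$ for the Alexandrov angle between $u_{x_0}$ and $u_x$. Fix $x_0\in\operatorname{spt}\mu$ and let $t\mapsto z(t)$ be the unit-speed geodesic from $y$ toward $x_0$. Since $y$ minimizes $z\mapsto\int_X d^2(x,z)\,d\mu(x)$ over $M$, the right derivative of $t\mapsto\int_X d^2(x,z(t))\,d\mu(x)$ at $t=0$ is nonnegative; the difference quotients are bounded uniformly in $x$ (because $|d^2(x,z(t))-d^2(x,y)|\le 2(R+t)\,t$), so dominated convergence together with Lemma \ref{lem: diff of distance} gives
$$
0\le \frac{d}{dt}\Big|_{t=0^+}\int_X d^2(x,z(t))\,d\mu(x)=-2R\int_X\cos\angle_y(x_0,x)\,d\mu(x).
$$
Hence $\int_X\cos\angle_y(x_0,x)\,d\mu(x)\le 0$ for every $x_0\in\operatorname{spt}\mu$, and integrating this against $\mu$ (the integrand is a bounded Borel function of $(x_0,x)$ by semicontinuity of the Alexandrov angle, so Fubini applies) yields $\int_X\int_X\cos\angle_y(x_0,x)\,d\mu(x_0)\,d\mu(x)\le 0$.

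Then I would feed in the $\mathrm{CAT}(k)$ comparison and finish by model–space trigonometry. For $x_0\ne x$ on the circumsphere, the angle comparison gives $\angle_y(x_0,x)\le\widetilde\gamma_k(d(x_0,x))$, where $\widetilde\gamma_k(\ell)$ is the angle opposite the side of length $\ell$ in the triangle in the $k$-model space whose other two sides both equal $R$ (all lengths involved stay in the admissible range, since $R<\pi/(2\sqrt k)$ and $d(x_0,x)\le D\le 2R<\pi/\sqrt k$ when $k>0$). By the law of cosines in the model space, $\ell\mapsto\cos\widetilde\gamma_k(\ell)$ is nonincreasing, so $\cos\angle_y(x_0,x)\ge\cos\widetilde\gamma_k(D)$ off the diagonal, while $\cos\angle_y(x,x)=1$. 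Splitting the double integral along the diagonal and writing $a:=\int_X\mu(\{x\})\,d\mu(x)=(\mu\times\mu)(\{x_0=x\})\in[0,1)$ (with $a<1$ because a single Dirac would have variance $0=R^2$, excluded when $R>0$, the case $R=0$ being trivial), we get $a+(1-a)\cos\widetilde\gamma_k(D)\le 0$, hence $\cos\widetilde\gamma_k(D)\le -a/(1-a)\le 0$, i.e. $\widetilde\gamma_k(D)\ge\pi/2$. Since the third side of a $k$-model triangle with two sides equal to $R$ is increasing in the included angle and $S(R,\infty,k)$ is precisely that third side for included angle $\pi/2$, this gives $D\ge S(R,\infty,k)$. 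If in addition $\mu$ is supported on $n+1$ points, Cauchy--Schwarz gives $a=\sum_i\mu(\{x_i\})^2\ge 1/(n+1)$, whence $\cos\widetilde\gamma_k(D)\le -a/(1-a)\le -1/n$, so $\widetilde\gamma_k(D)\ge\arccos(-1/n)$ and $D\ge S(R,n,k)$, as $S(R,n,k)$ is the third side for included angle $\arccos(-1/n)$.

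The model–space identities (that $S(R,n,k)$ and $S(R,\infty,k)$ are the stated model triangle side lengths, and the two monotonicity statements) are routine applications of the law of cosines in constant curvature. I expect the genuine content, and the main point needing care, to be the first-variation/averaging step: justifying differentiation under the integral sign via the uniform bound on the difference quotients, and ensuring the minimizing geodesics from $y$ are unique so that the direction map $x\mapsto u_x$ and the angle $\angle_y$ are well defined and measurable --- both of which are exactly what the radius hypothesis on $X$ provides.
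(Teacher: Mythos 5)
Your proposal is correct, and its first half --- unique barycenters via Yokota, the saddle-point corollaries giving the unique circumcenter $y$ and forcing every variance maximizer onto the circumsphere, and the first-variation ``balance condition'' $\int_X\cos\angle_y(x_0,x)\,d\mu(x)\le 0$ for every $x_0$ in the support of $\mu$ --- is exactly the paper's argument, including the same justification of differentiation under the integral. Where you genuinely diverge is in converting the balance condition into the diameter bound. The paper works pointwise: it fixes a single $z$ in the support of $\mu$ (for the $(n+1)$-point case, the atom of maximal weight $\lambda_1\ge 1/(n+1)$), deduces that \emph{some} $x$ in the support satisfies $\cos\angle_y(z,x)\le 0$ (resp.\ $\le -1/n$), and applies the hinge comparison \cite[8.3.1]{AlexanderKapovitchPetrunin} to that one pair to get $D\ge d(x,z)\ge d_k(R,R,\pi/2)$ (resp.\ $d_k(R,R,\arccos(-1/n))$). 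You instead symmetrize: integrate the balance condition against $d\mu(x_0)$, bound $\cos\angle_y(x_0,x)$ from below off the diagonal by $\cos\widetilde\gamma_k(D)$ using the angle form of the CAT($k$) comparison together with monotonicity of the model law of cosines, and read off $a+(1-a)\cos\widetilde\gamma_k(D)\le 0$ where $a$ is the diagonal mass of $\mu\times\mu$; Cauchy--Schwarz then gives $a\ge 1/(n+1)$ in the atomic case. Both routes rest on the same two ingredients (first variation plus triangle comparison, in hinge form for the paper and angle form for you), and your computations check out against the stated model-space interpretation of $S(R,n,k)$. Your version is more uniform --- both bounds fall out of one inequality in the diagonal mass, and it even sharpens to $\cos\widetilde\gamma_k(D)\le -a/(1-a)$ when $\mu$ carries large atoms --- at the cost of a Fubini/measurability step for the angle function, which you correctly flag and which does hold (the Alexandrov angle is upper semicontinuous, hence Borel, on the region where geodesics from $y$ are unique, guaranteed here by $R<\pi/(2\sqrt{k})$); the paper's pointwise selection sidesteps that issue entirely.
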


\begin{remark}
	Lang and Schroeder's version did not require the compactness assumption on $X$ or $Y$, assuming instead only boundedness of the non-empty set $X$.  On the other hand, compactness seems to be necessary for our approach to work.  If $X=\{e_i\}_{i=1}^{\infty}$ is an orthonormal basis in a Hilbert space $M$, then the circumcenter $0$ is the origin, which is not a barycenter of any probability measure on $X$.  Therefore, there is no $\mu \in P(X)$ so that $(\mu,\delta_0)$ is a saddle point of $\mathcal{V}$, which is a key ingredient in our argument.
\end{remark}

\begin{proof}
The existence and uniqueness of barycenters in CAT($k$) spaces, due, under the conditions here, to Yokota \cite{Yokota17}\cite{Yokota16}, together with Corollary \ref{cor: unique circumcenter} imply the existence of a unique circumcenter, $y\in Y$ which is the barycenter of every variance maximizing $\mu \in P(X)$, which, in turn, are all supported on the boundary $\partial B(y,R)$ of the circumball.

Now fix a variance maximizing $\mu$ and let $y(\cdot):[0,R] \rightarrow M$ be a unit speed geodesic starting at $y(0)=y$.    By Lemma \ref{lem: diff of distance} and the dominated convergence theorem,  the function
\begin{equation}\label{eqn: variance function}
t \mapsto \int_Xd^2(y(t),x)d\mu(x)
\end{equation}
is differentiable at $t=0$ and its derivative is 

\begin{equation}\label{eqn: variance derivative}
\int_X-2d(y,x)\cos(\alpha(x))d\mu(x)=-2R\int_X\cos(\alpha(x))d\mu(x),
\end{equation}
where $\alpha(x)$ is the angle at $y=y(0)$ between $y(t)$ and the geodesic  joining $y$ and $x$. Since $y$ is a barycenter of $\mu$, the function \eqref{eqn: variance function} is minimized at the endpoint $t=0$ and so its derivative \eqref{eqn: variance derivative} must be non-negative.

Now, letting the endpoint $z:=y(R) \in \partial B(y,R)$ of the geodesic be any point in the support of $\mu$, this inequality means $\cos(\alpha(x)) \leq 0$ for some $x$ in the support of $\mu$, meaning that $|\alpha(x)|\geq \frac{\pi}{2}$.  Therefore, since the points $y$, $x$ and $z$ form a triangle in the $CAT($k$)$ space with the angle at $y$ equal to $\alpha(x) \geq \frac{\pi}{2}$, we have by \cite[8.3.1]{AlexanderKapovitchPetrunin}

$$
D \geq d(x,z) \geq d_k(R,R,\alpha(x)) \geq d_k(R,R,\frac{\pi}{2}) = S(R,\infty,k)
$$
where $d_K(l_1,l_2,\alpha)$ denotes the length of the third side of a triangle in the model space of constant curvature $k$ with two side lengths equal to $l_1$ and $l_2$ and an angle of $\alpha$ between them. 

Now, suppose further that there exists a variance maximizing $\mu$ supported on $n+1$ points, so that $\mu=\sum_{i=1}^{n+1}\lambda_i\delta_{x_i}$    with each $\lambda_i>0$ and $\sum_{i=1}^{n+1}\lambda_i=1$.  Assume without loss of generality that $\lambda_1=\max_i\lambda_i$ and choose the geodesic $y(\cdot)$ so that $y(R) =x_1$.  We then have
$$
\sum_{i=1}^{n+1}\lambda_i\cos(\alpha(x_i)) \leq 0,
$$
where each $\alpha(x_i)$ is the angle at $y=y(0)$ between the geodesic $y(t)$ from $y$ to $x_1$, and the geodesic from $y$ to $x_i$.  Let $C=\min_i\cos(\alpha(x_i)) \leq 0$.  Then the above yields
\begin{eqnarray*}
-\lambda_1 &\geq & \sum_{i=2}^{n+1}\lambda_i\cos(\alpha(x_i))\\
&\geq & C\sum_{i=2}^n\lambda_i\\
&\geq &Cn\lambda_1.
\end{eqnarray*}
Therefore, $C\leq -\frac{1}{n}$.  Assuming without loss of generality that $C=\cos(\alpha(x_2))$, we have, again by \cite[8.3.1]{AlexanderKapovitchPetrunin},
$$
D \geq d(x_1,x_2) \geq d_K(R,R,\arccos (-\frac{1}{n}))=S(R,n,k),
$$
as desired.
\end{proof}

\begin{remark}
	In general, the inequality $D \geq S(R,\infty,k)$ is the best we can hope for (note that the notion of CAT($k$) space does not come with a dimension). 
	 Dekster \cite{Dekster97} proves $D \geq S(R,n,k)$ under (a strengthening of) the additional assumption that  the intersection  $\omega( X \cap \partial B(y,R))$  is contained in a set isometric to the $(n-1)$ -- dimensional unit sphere (here $\omega( X)$ is the image of $X$ under the canonical mapping into the space of directions at $y$), as is always the case, for example, on an $n$-dimensional Riemannian manifold with sectional curvature less than $k$.  Under Dekster's assumption, Cartheodory's theorem implies that that $\mu$ can be taken to be supported on $n+1$ points, as hypothesized by Lang and Schroeder \cite{LangSchroeder97} and in our version above.
\end{remark}
\bibliographystyle{plain}
\bibliography{biblio}
\end{document}